\newtheorem{thm}{Theorem}
\newtheorem{lem}{Lemma}
\newtheorem{df}{Definition}
\newtheorem{prop}{Proposition}
\begin{document}

\title{$k$-fault-tolerant graphs for $p$ disjoint complete graphs of order $c$}

\author{Sylwia Cichacz$^{1}$, Agnieszka Görlich$^{1}$, Karol Suchan$^{2,1}$\\ ~~ \\
\normalsize $^1$AGH University of Science and Technology, Krakow, Poland, cichacz@agh.edu.pl, forys@agh.edu.pl\\
\normalsize $^2$Universidad Diego Portales,  Santiago, Chile,  karol.suchan@mail.udp.cl}

\date{\today}
\maketitle

\begin{abstract}
Vertex-fault-tolerance was introduced by Hayes~\cite{Hayes1976} in 1976, and since then it has been systematically studied in different aspects. In this paper we study $k$-vertex-fault-tolerant graphs for $p$ disjoint complete graphs of order $c$, i.e., graphs in which removing any $k$ vertices leaves a graph that has $p$ disjoint complete graphs of order $c$ as a subgraph. The main contribution is to describe such graphs that have the smallest possible number of edges for $k=1$, $p \geq 1$, and $c \geq 3$. Moreover, we analyze some properties of such graphs for any value of $k$.
\end{abstract}

\section{Introduction}

Given a graph $H$\footnote{Throughout the paper we deal with simple undirected graphs. For standard terms and notations in graph theory, the reader is referred to the books of Diestel~\cite{D2017} and Brandstädt et al.~\cite{Brandstadt}.} and a positive integer $k$, a graph $G$ is called \emph{vertex $k$-fault-tolerant with respect to $H$}, denoted by  $k$-FT$(H)$, if $G-S$ contains a subgraph isomorphic to $H$ for every $S\subset V(G)$ with $|S|\leq k$. 

Vertex-fault-tolerance was introduced by Hayes~\cite{Hayes1976} in 1976 as a graph theoretic model of computer or communication networks working correctly in the presence of faults. The main motivation for the problem of constructing $k$-fault-tolerant graphs is in finding fault tolerant architectures. A graph $H$ represents the desired interconnection network and a $k$-FT$(H)$ graph $G$ allows to emulate the graph $H$ even in presence of $k$ vertex (processor) faults. 

The problem has been systematically studied with different quality measures of $k$-fault-tolerant graphs. Hayes~\cite{Hayes1976} and Ajtai et al.~\cite{AABC} considered $k$-FT$(H)$ graphs with $|V(H)|+k$ vertices and the number of edges as small as possible. A different quality measure of $k$-FT$(H)$ graphs was introduced by Ueno et al. \cite{UBHS1993}, and independently by Dudek et al.~\cite{DSZ}, where the authors were interested in $k$-FT$(H)$ graphs having as few edges as possible, disregarding the number of vertices (see also~\cite{Z2}, ~\cite{PZ}). Yet another setup was studied by Alon and Chung~\cite{AC}, Ueno and Yamada~\cite{UY} and Zhang~\cite{Zhang}. They allowed $O(k)$ spare vertices in $k$-FT$(H)$ graphs and focused on minimizing the maximum degree (giving priority to the scalability of a network). Other results on $k$-fault-tolerance can be found, for example, in \cite{CS,Favaron,Luetal2021FTclubs,ZAK2014421}.

In this paper we study the variant introduced by Hayes. He characterized  $k$-FT$(H)$ graphs of order $|V(H)|+k$ in the case where $H$ is a path, a cycle or a tree of a special type~\cite{Hayes1976}. Some results related to constructing a $k$-fault-tolerant supergraph for an arbitrary graph $H$ have also been published (e.g., see~\cite{BCH},~\cite{DH}).

We focus on $k$-FT($pK_c$) graphs, where $pK_c$ is the disjoint union of $p$ complete graphs of order $c$, for $k, p \geq 1$, $c \geq 3$. Our main contribution is to describe minimum $k$-FT($pK_c$) graphs for $k=1$ and any values of $p$ and $c$ (Theorem \ref{thm:1pc}).

The paper is organized as follows. In Section \ref{se:prelim} we provide some definitions and present basic properties of $k$-FT($pK_c$) graphs, analyze their connectivity and separators of size $k$, and present an upper bound on the size of minimum $k$-FT($pK_c$) graphs. In Section \ref{se:main}, we prove that this upper bound is tight when $k=1$ and fully characterize $1$-FT($pK_c$) graphs. Finally, we present some final comments in Section \ref{se:conc}.

\section{$k$-FT($pK_c$) graphs}\label{se:prelim}

Given a graph $G=(V,E)$ and a vertex $v$, $v \in V$, we use $N_G(v)$ to denote the \textit{neighborhood} of $v$ in $G$, i.e., the set of \textit{neighbors} of $v$: vertices $y$, $y \in V$, such that $\{v,y\} \in E$. Given a set of vertices $U$, $U \subset V$, $N_G(U)$ denotes the set of vertices in $V \setminus U$ that have a neighbor in $U$. We use $N_G[v]$ to denote the \textit{closed neighborhood} of $v$: $N_G[v] = N_G(v) \cup \{v\}$. Likewise, we have $N_G[U] = N_G(U) \cup U$. When it does not lead to confusion, we omit the subscript, writing just $N(v), N[v], N(U)$ and $N[U]$. We use $K_c$ to denote a complete graph on $c$ vertices. The vertex set of a complete graph is called a \textit{clique}.

\subsection{Basic properties}\label{ss:basic}
Let us start with the main definition.

\begin{df}\label{df:ft}
Let $k$, $p$, and $c$ be integers with $k \geq 0$, $p \geq 1$, and $c \geq 2$. Let $G=(V,E)$ be a graph. We say that $G$ is \textit{$k$-FT($pK_c$)} if $G-S$ contains the union of $p$ disjoint complete graphs $K_c$ as a subgraph, for any $S$, $S \subset V$ and $|S| \leq k$. We say that $G$ is \textit{minimal $k$-FT($pK_c$)} if no proper subgraph of $G$ is $k$-FT($pK_c$). We say that $G$ is \textit{minimum $k$-FT($pK_c$)} if $|V|=pc+k$ and there is no $k$-FT($pK_c$) graph $G'$ with $|V(G')|=|V|$ and $|E(G')|<|E|$.
\end{df}

It is easy to check that, in order to prove that $G$ is \textit{$k$-FT($pK_c$)}, it is enough to verify that $G-S$ contains the union of $p$ disjoint complete graphs $K_c$ as a subgraph, for any $S$, $S \subset V$ with $|S| = k$ (equality instead of weak inequality). For the sake of simplicity, some of the following proofs use this observation without stating it explicitly.

For $c=2$ and $G=(V,E)$ such that $|V|=2p+k$, the concept of $k$-FT($pK_c$) graphs has been widely studied under the name  \textit{$k$-factor critical}  graphs. This idea was first introduced and studied for $k = 2$ by Lov\'asz \cite{Lovasz} under the term of \textit{bicritical graph}. For $k>2$ it was introduced by Yu in 1993 \cite{Yu}, and independently by Favaron in 1996 \cite{Favaron}. For even values of $k$ and $|V|=2p+k \leq 2k-2$, it was shown that if $G=(V,E)$ is minimum $k$-FT($pK_2$), then $|E|=(k + 1)|V|/2$ \cite{zhang2012minimum}. 

In what follows, we focus on the cases with $c\geq3$. Let us start with a few simple lemmas that give some basic properties of $k$-FT($pK_c$) graphs.

\begin{lem}\label{lem:ft}
Let $k$, $p$, and $c$ be integers with $k \geq 0$, $p \geq 1$, and $c \geq 3$. Let $G=(V,E)$ be a $k$-FT($pK_c$) graph with $|V|=pc+k$. Then every vertex $x$, $x\in V$: 
\begin{enumerate}
\item\label{obs:ft:c+k} belongs to a subgraph isomorphic to $K_c$ in $G$, 
\item\label{obs:ft:d} is of degree at least $c+k-1$ in $G$,
\item\label{obs:ft:c+kh} belongs to a subgraph isomorphic to $K_c$ in $G'$, $G'=G-S$, for any $S$ with $S \subset V\setminus \{x\}$ and $|S| = k$.
\end{enumerate}
\end{lem}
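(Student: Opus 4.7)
The plan is to prove the three items in the order $(3) \Rightarrow (1) \Rightarrow (2)$, since $(3)$ is the strongest statement and readily implies the other two. The key observation is a simple counting argument: because $|V|=pc+k$, for any $S \subset V$ with $|S|=k$ the graph $G-S$ has exactly $pc$ vertices, so the $p$ vertex-disjoint copies of $K_c$ guaranteed by the $k$-FT($pK_c$) property must cover $V(G-S)$ entirely.

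This immediately yields $(3)$: for any $S \subset V \setminus \{x\}$ with $|S|=k$, the vertex $x$ belongs to $V(G-S)$ and is therefore contained in one of the $p$ disjoint $K_c$'s. Item $(1)$ follows by picking any such $S$ (which exists because $|V \setminus \{x\}|=pc+k-1\geq k$, using $pc\geq 1$; the case $k=0$ is trivial since then $G$ itself is covered by the $p$ disjoint $K_c$'s) and observing that a $K_c$ in $G-S$ is also a $K_c$ in $G$.

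For $(2)$, the idea is to apply $(3)$ with $S$ chosen adversarially against $x$: if $x$ lies in a $K_c$ of $G-S$, then $x$ must have at least $c-1$ neighbors in $V(G-S)\setminus\{x\}$, i.e., $|N(x) \setminus S| \geq c-1$. Assuming $|N(x)| \geq k$, I would take $S \subset N(x)$ with $|S|=k$, which gives $|N(x)|-k \geq c-1$ and hence the desired bound $\deg(x) \geq c+k-1$. The remaining degenerate case $|N(x)|<k$ is excluded by choosing any $S \supseteq N(x)$ of size $k$ inside $V \setminus \{x\}$, forcing $|N(x) \setminus S|=0$ and contradicting $(3)$ since $c-1 \geq 2$.

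I do not anticipate any real obstacle here: the three items are direct consequences of the equality $|V|=pc+k$ together with the definition of $k$-FT($pK_c$). The only minor point to verify along the way is that the adversarial sets $S$ used in the argument actually exist, which follows immediately from $|V|-1 \geq k$.
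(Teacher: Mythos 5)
Your proof is correct and follows essentially the same route as the paper's: the whole lemma rests on the counting observation that $|V(G-S)|=pc$ forces the $p$ disjoint copies of $K_c$ to cover every remaining vertex, and item (2) follows by deleting a size-$k$ subset of $N(x)$. If anything, your explicit treatment of the degenerate case $|N(x)|<k$ is slightly more careful than the paper's "it is easy to see that $S\subset N_G(x)$".
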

\begin{proof}
Let $x$ be any vertex of $G$. Choose any $S$ with $S \subset V\setminus \{v\}$ and $|S| = k$, such that $|S \cap N_G(v)|$ is maximum over all such subsets. Notice that $S$ exists, as $|V|=pc+k$ and $p$, $c$, $k$ are positive integers. Since $G$ is $k$-FT($pK_c$), $G'=G-S$ contains $p$ disjoint subgraphs isomorphic to $K_c$. Moreover, $|G'|=pc$, thus $x$ belongs to one of them - which proves Item \ref{obs:ft:c+k}. Moreover, it implies that $d_{G'}(x) \geq c-1$. By the choice of $S$, it is easy to see that $S \subset N_G(x)$. Thus $d_G(x) \geq c+k-1$ - which proves Item \ref{obs:ft:d}. Clearly, a similar reasoning applies for any choice of $S$, with $S \subset V\setminus \{v\}$ and $|S| = k$ - which proves Item \ref{obs:ft:c+kh}.
\end{proof}

In the following, we will use interchangeably the expressions that a $0$-FT($pK_c$) graph $G'$ contains $p$ disjoint complete graphs $K_c$ and that its vertex set $V(G')$ contains $p$ disjoint cliques of size $c$.

\begin{lem}\label{lem:Kc+k-1}
Let $k$, $p$, and $c$ be integers with $k \geq 0$, $p \geq 1$, and $c \geq 3$. Let $G=(V,E)$ be a $k$-FT($pK_c$) graph with $|V|=pc+k$. Then every vertex $x$, $x \in V$, with $d(x) = c+k-1$ belongs to a subgraph of $G$ isomorphic to $K_{c+k}$. Moreover, if $W$ is a separator of size $k$ in $G$ and $A$ is a component of $G-W$ of order $c$, then $W \cup V(A)$ is a clique.
\end{lem}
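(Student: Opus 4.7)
The plan is to handle the two claims in sequence, using Lemma~\ref{lem:ft} as the main tool and arranging the second claim to follow quickly from the first.

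For the first claim, my plan is to apply Lemma~\ref{lem:ft}(\ref{obs:ft:c+kh}) repeatedly, choosing fault sets of the form $S = N_G(x) \setminus A$, where $A$ ranges over all $(c-1)$-subsets of $N_G(x)$. The degree hypothesis makes $|S|$ equal to exactly $k$, so the lemma forces $x$ into a $K_c$ in $G-S$; since the only neighbors of $x$ left in $G-S$ are the vertices of $A$, this $K_c$ must be $\{x\} \cup A$, whence $A$ itself is a clique. Because this holds for every $(c-1)$-subset of $N_G(x)$, and every pair of neighbors of $x$ lies in such a subset (using $c \geq 3$ and $|N_G(x)| = c+k-1 \geq c-1$), the whole set $N_G(x)$ is a clique, and hence $N_G[x]$ induces $K_{c+k}$.

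For the second claim, my plan is to reduce it to the first via a degree argument. Fix any $x \in V(A)$. Since $A$ is a connected component of $G-W$, every neighbor of $x$ lies in $V(A) \cup W$, giving $d_G(x) \leq (c-1)+k$. The matching lower bound from Lemma~\ref{lem:ft}(\ref{obs:ft:d}) forces equality, so $N_G(x) = (V(A) \cup W) \setminus \{x\}$. Applying the first claim of the present lemma then identifies $N_G[x] = V(A) \cup W$ as a copy of $K_{c+k}$, which is precisely the statement.

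The main technical point to be careful about is the first claim's fault-set construction: $|S|$ must be exactly $k$ (not merely at most $k$), $S$ must avoid $x$, and exactly $c-1$ neighbors of $x$ must remain, so that the $K_c$ produced by Lemma~\ref{lem:ft}(\ref{obs:ft:c+kh}) is pinned down with no freedom. Once the first claim is established in this form, the second claim becomes essentially bookkeeping, since the component structure dictates $N_G(x)$ completely.
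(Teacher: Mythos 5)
Your proposal is correct and follows essentially the same route as the paper: for the first claim, the paper likewise takes $S$ to be an arbitrary $k$-subset of $N_G(x)$, notes that the $K_c$ containing $x$ in $G-S$ must use the remaining $c-1 \geq 2$ neighbors, and varies $S$ to conclude $N_G[x]$ is a clique; for the second claim, the paper also reduces to the first via the observation that vertices of $A$ have degree exactly $c+k-1$. Your write-up is somewhat more explicit about the bookkeeping (e.g., that every pair of neighbors lies in some $(c-1)$-subset), but the argument is the same.
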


\begin{proof}
Let $x$ be any vertex of $G$ with $d_G(x)=c+k-1$. For any choice of $S$ with $|S|=k$ and $S \subset N_G(x)$, by Lemma \ref{lem:ft}, $x$ belongs to a copy of $K_{c}$ in $G'$, $G'=G-S$. Since $c\geq 3$, $N_{G'}(x)$ is a clique of size at least $2$. Therefore, $N_G[x]$ is also a clique.

For the second part of the lemma, note that $V(A)$ contains a vertex of degree $c+k-1$. The conclusion follows.
\end{proof}

\subsection{Connectivity and separators}\label{ss:connectivity}

Now let us proceed with some observations on the connectivity of $k$-FT($pK_c$) graphs.

\begin{lem}\label{lem:c+k-1ec}
Let $k$, $p$, and $c$ be integers with $k \geq 0$, $p \geq 1$, and $c \geq 3$. Let $G=(V,E)$ be a $k$-FT($pK_c$) graph with $|V|=pc+k$. Then $G$ is $c+k-1$ edge-connected.
\end{lem}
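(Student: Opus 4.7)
The plan is to argue by contradiction: assume $G$ admits an edge cut $F$ with $|F| \leq c+k-2$ and let $(A,\bar A)$ be the induced vertex partition.

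First I would bound $|A|$ and $|\bar A|$ from below by a degree count. For $v \in A$, write $x_v$ for the number of its neighbours in $\bar A$, so $\sum_{v \in A} x_v = |F|$. By Lemma~\ref{lem:ft}(\ref{obs:ft:d}) every vertex has degree at least $c+k-1$, and since $v$ has at most $|A|-1$ neighbours in $A$, we get $x_v \geq c+k-|A|$ whenever this is positive. Summing yields $|F| \geq |A|(c+k-|A|)$, which is at least $c+k-1$ for every $1 \leq |A| \leq c+k-1$. Hence $|A| \geq c+k$ and, by symmetry, $|\bar A| \geq c+k$.

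The second step uses the $k$-fault-tolerance property to derive divisibility constraints. Set $B = \{v \in A : x_v \geq 1\}$ and define $B' \subseteq \bar A$ symmetrically; $|B|, |B'| \leq |F| \leq c+k-2$, so both $A \setminus B$ and $\bar A \setminus B'$ have at least two vertices. I would pick $S \subseteq A$ of size $k$ covering as much of $B$ as possible: if $|B| \leq k$ take $S \supseteq B$ with the remaining vertices from $A \setminus B$, otherwise take any $S \subseteq B$. In either case $G - S$ has at most $c-2$ cut edges, strictly fewer than the $c-1$ cut edges contributed by any single crossing $K_c$. Consequently, the partition of $V(G - S)$ into $p$ disjoint cliques of size $c$ guaranteed by Definition~\ref{df:ft} (recalling $|V(G-S)| = pc$) contains no crossing clique, forcing $c \mid (|A|-k)$ and $c \mid |\bar A|$. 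A symmetric argument with $S \subseteq \bar A$ yields $c \mid |A|$ and $c \mid (|\bar A|-k)$; altogether $c \mid |A|$, $c \mid |\bar A|$, and $c \mid k$.

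The main obstacle is eliminating this last case, in which $c \mid k$ (so $k \geq c \geq 3$) and both $|A|,|\bar A|$ are multiples of $c$. I would refine the choice of $S$ to $S = S' \cup \{w\}$, where $w \in \bar A \setminus B'$ (so $x_w = 0$) and $S' \subseteq A$ is a $(k-1)$-subset. The analogous analysis of the $p$ cliques partitioning $V(G - S)$ produces two alternatives: either no crossing clique is present, in which case $c$ must divide $|A|-k+1$, impossible since $|A|, k$ are multiples of $c$ and $c \geq 3$; or the partition has exactly one crossing clique, whose unique vertex in $A$ must carry at least $c-1$ cut edges. Choosing $S'$ appropriately (covering $B$ when $|B| \leq k-1$, or varying it inside $B$ when $|B| \geq k$) and using the budget $\sum_{v \in B} x_v = |F| \leq c+k-2$ (which together with $c \geq 3$ permits at most one vertex of $B$ to carry $c-1$ cut edges) ultimately forces $|F|$ to exceed $c+k-2$, delivering the final contradiction.
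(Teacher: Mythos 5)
Your proof is correct for $k \geq 1$ and follows the same strategy as the paper's: delete a set $S$ of $k$ vertices chosen to destroy enough edges of the cut $F$ that no copy of $K_c$ can cross it (a crossing $K_c$ with $a$ vertices on one side needs $a(c-a) \geq c-1$ surviving cut edges), then derive a contradiction from the fact that the $p$ disjoint copies of $K_c$ in $G-S$ must partition $V(G-S)$ side by side, yielding a divisibility condition that fails. The paper compresses the choice of $S$ into a single sentence (``it is easy to check that such $S$ exists''); your case analysis --- the two one-sided choices of $S$ forcing $c \mid |A|$, $c \mid |\bar A|$ and $c \mid k$, followed by the mixed choice $S = S' \cup \{w\}$ with $|S' \cap A|=k-1$ --- actually substantiates that claim and shows it is not entirely trivial, which is a genuine improvement in rigor. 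Two caveats. First, your parenthetical claim that the budget permits at most one vertex of $B$ to carry $c-1$ cut edges is not true unconditionally: since the surviving case has $k \geq c$, one can have two such vertices (e.g.\ $c=3$, $k=3$, $|F|=4$, $B=\{u_1,u_2\}$ with $x_{u_1}=x_{u_2}=2$). It does hold in the subcase $|B| \geq k$ where you invoke it, via $|F| \geq |B| + m(c-2) \geq k+m(c-2)$ and $|F|\leq c+k-2$, giving $m \leq 1$; you should make that dependence explicit, and in that subcase put the unique heavy vertex into $S'$, which leaves at most one surviving cut edge ($< c-1$) and so excludes any crossing clique, completing the contradiction. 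Second, both your argument and the paper's silently require $k \geq 1$: for $k=0$ and $p \geq 2$ the lemma as stated is false ($pK_c$ itself is a disconnected $0$-FT($pK_c$) graph of order $pc$), so the hypothesis $k \geq 0$ is a defect of the statement rather than of your proof; the lemma is only ever applied with $k=1$.
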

\begin{proof}
Suppose that $G$ is not $c+k-1$ edge-connected. It implies that there exists a separating set of at most $c+k-2$ edges $F$, $F \subset E$. Let $A$ be a component of $G-F$. Choose any $S$, with $S \subset V$ and $|S|=k$, such that $|V(A)|-b \not\equiv 0 \pmod c$ and $|\widehat{F}| \geq k$, where $b=|S \cap V(A)|$ and $\widehat{F}=\{f \in F \mid f \cap S \neq \emptyset\}$. It is easy to check that such $S$ exists and $|F'|<c-1$, where $F' = F \setminus \widehat{F}$. Let $A' = A - S$ and $G'=G - S$. It is easy to check that, since $|F'|<c-1$, $G'$ does not contain a copy of $K_c$ intersecting both $V(A')$ and $V \setminus S \setminus V(A')$. On the other hand, since $G$ is $k$-FT($pK_c$) and $|V|=pc+k$, $G'$ contains $p$ disjoint copies of $K_c$, where every vertex of $G'$ belongs to one of them. But the order of $A'$ is not a multiple of $c$ - a contradiction.
\end{proof}

\begin{lem}\label{lem:kvc}
Let $k$, $p$, and $c$ be integers with $k \geq 0$, $p \geq 1$, and $c \geq 3$. Let $G=(V,E)$ be a $k$-FT($pK_c$) graph with $|V|=pc+k$. Then $G$ is $k$ connected.
\end{lem}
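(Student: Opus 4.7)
The plan is to argue by contradiction: supposing $G$ has a separator $W$ with $w := |W| \leq k-1$ (the claim being vacuous for $k=0$), I will construct $S \supseteq W$ with $|S|=k$ such that some connected component of $G - S$ has order not divisible by $c$. This contradicts the $k$-FT($pK_c$) hypothesis, because $|V(G-S)|=pc$ forces any $p$ disjoint $K_c$'s in $G-S$ to cover every vertex, so every component of $G-S$ must have order divisible by $c$.

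Write $A_1,\dots,A_m$ ($m\geq 2$) for the components of $G-W$ and $n_i = |V(A_i)|$. Two facts drive the construction. First, counting vertices gives $\sum_i n_i = pc + (k-w)$. Second, Lemma~\ref{lem:ft}(\ref{obs:ft:d}) forces $d_G(v) \geq c+k-1$ for each $v \in V(A_i)$, and since the neighbors of $v$ lie in $V(A_i)\cup W$, this yields $n_i \geq c+k-w$ for every $i$. In particular, $\sum_{j\neq i} n_j \geq c + k - w \geq k - w$ for each $i$.

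I would then split into two subcases. If some $n_i \not\equiv 0 \pmod c$, put $S := W \cup T$ with $T$ any $(k-w)$-subset of $\bigcup_{j\neq i} V(A_j)$; such $T$ exists by the bound above. Then $V(A_i)$ is untouched, and because $W \subseteq S$ and $A_i$ was a component of $G-W$, it is a connected component of $G-S$ of order $n_i \not\equiv 0 \pmod c$. Otherwise every $n_i$ is divisible by $c$, so $k-w = \sum_i n_i - pc \equiv 0 \pmod c$, hence $k-w \geq c \geq 3$. In this situation I would take $k-w-1$ vertices from $V(A_1)$ and one vertex from $V(A_2)$ to form $T$; the inequalities $n_1 \geq c+k-w > k-w-1$ and $n_2 \geq 1$ make this legal. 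Now $V(A_1)\setminus T$ is a union of components of $G-S$ of total order $n_1-(k-w-1) \equiv 1 \pmod c$, so at least one such component has order not divisible by $c$.

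The routine bookkeeping — that the chosen leftover vertex sets really form unions of components of $G-S$, and that the arithmetic of residues goes through — all rest on the single observation that $W\subseteq S$ severs the $A_i$'s from each other in $G-S$. The main subtlety is the second subcase: one must notice that the obvious choice of placing all $k-w$ deletions inside a single $A_i$ fails (it preserves divisibility by $c$ on both sides), which is precisely why the deletion has to be split between two components. Once this split is in place, the divisibility mismatch contradicts the $k$-FT($pK_c$) property and finishes the proof.
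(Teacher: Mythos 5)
Your proposal is correct and follows essentially the same route as the paper: extend the undersized separator $W$ to a set $S$ of size $k$ so that some component of $G-S$ has order not divisible by $c$, contradicting the fact that the $p$ disjoint copies of $K_c$ must exactly cover the $pc$ vertices of $G-S$ component by component. The paper merely asserts that such an $S$ can be chosen; your two-case analysis (and in particular the observation that when all components of $G-W$ have order divisible by $c$ one must split the extra deletions between two components) supplies the existence argument the paper leaves implicit.
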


\begin{proof}
Suppose that $G$ is not $k$ connected. So there is a separator $W$ in $G$ with $|W| < k$. Moreover, we can choose $S$, with $S \subset V$, $|S|=k$, and $W \subsetneq S$, in such a way that the order of one of the components of $G'$, $G'=G-S$, is not a multiple of $c$. On the other hand, since $G$ is $k$-FT($pK_c$), $G'$ contains $p$ disjoint copies of $K_c$ - a contradiction.
\end{proof}

Let us analyze some properties of $k$-FT($pK_c$) graphs $G=(V,E)$ with $|V|=pc+k$ that contain a separator of size $k$.

\begin{lem}\label{lem:Kkvc}
Let $k$, $p$, and $c$ be integers with $k \geq 0$, $p \geq 1$, $c \geq 3$, and $k<c$. Let $G=(V,E)$ be a $k$-FT($pK_c$) graph with $|V|=pc+k$. Let $W$, with $W\subset V$ and $|W|=k$, be a separator in $G$. Let $\{A_i\}_{i=1}^{z}$ be the components of $G'$, $G'=G-W$, and let $p_i=|V(A_i)|/c$ for every $i$, $1 \leq i \leq z$. Then $\sum_{i=1}^{z}p_i=p$ and $G_i$, $G_i=G[V(A_i) \cup W]$, is $k$-FT($p_iK_c$) for every $i$, $1 \leq i \leq z$. Moreover, if $W$ is a clique and $G$ is minimum $k$-FT($pK_c$), then $G_i$ is minimum $k$-FT($p_iK_c$) for every $i$, $1 \leq i \leq z$.
\end{lem}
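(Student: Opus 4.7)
The plan is to establish the three claims in order, with the third being the main source of difficulty.

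For $\sum p_i = p$: I would apply the fault-tolerance of $G$ to the set $S = W$ (which has size $k$). This produces $p$ pairwise disjoint copies of $K_c$ in $G - W$. Since $A_1, \ldots, A_z$ are the components of $G - W$, each such $K_c$ lies entirely inside some $V(A_i)$. Moreover, $|V(G-W)| = pc$, so these copies must cover every vertex of $G-W$. Consequently, $c$ divides $|V(A_i)|$ for every $i$, and summing over $i$ yields $\sum_{i=1}^{z} p_i = p$.

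For each $G_i$ being $k$-FT($p_iK_c$): I would fix any $S \subset V(G_i)$ with $|S| = k$, note that $S \cap V(A_j) = \emptyset$ for $j \neq i$, and apply the $k$-fault-tolerance of $G$ to obtain $p$ disjoint $K_c$'s in $G - S$ covering all $pc$ vertices outside $S$. Since $|W \setminus S| \leq k < c$, no $K_c$ lies inside $W \setminus S$, and since the $A_j$'s are distinct components, no $K_c$ spans two different $A_j$'s. Hence each $K_c$ is contained in $V(A_j) \cup W$ for a single $j$. For $j \neq i$, if $q_j$ of these copies touch $V(A_j)$ and jointly use $w_j$ vertices from $W \setminus S$, then $q_j c - w_j = |V(A_j)| = p_j c$; the bound $w_j \leq |W \setminus S| < c$ forces $w_j = 0$ and $q_j = p_j$. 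Therefore $p_i$ of the $K_c$'s remain inside $V(G_i) \setminus S$, which is the claim.

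For the \emph{moreover} clause, I would argue by contradiction. Assume that $G_i$ is not minimum and take a $k$-FT($p_iK_c$) graph $H$ with $|V(H)| = p_ic + k$ and $|E(H)| < |E(G_i)|$. The key observation is that the definition applied with $S = \emptyset$ gives that $H$ contains a $K_c$, and since $k < c$, $H$ contains a $K_k$, say on a vertex set $W_H$. Fix any bijection $\phi \colon V(H) \to V(G_i)$ with $\phi(W_H) = W$, and build $G^*$ by replacing the edges of $G$ inside $V(G_i)$ with the $\phi$-image of $E(H)$, leaving all other edges unchanged. Because $W$ is a clique both in the $\phi$-image of $H$ and in every $G_j$ with $j \neq i$, $W$ remains a clique in $G^*$, and a direct count gives $|E(G^*)| = |E(G)| - |E(G_i)| + |E(H)| < |E(G)|$.

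The main obstacle is verifying that $G^*$ is $k$-FT($pK_c$), which would contradict the minimality of $G$; this amounts to a ``gluing argument'' that is essentially the converse of the second claim. By construction, $G_i^* = G^*[V(A_i) \cup W]$ is isomorphic to $H$ and hence $k$-FT($p_iK_c$), while $G_j^* = G_j$ is $k$-FT($p_jK_c$) by the second claim for $j \neq i$. Given any $S \subset V(G^*)$ with $|S| = k$, I would write $S_W = S \cap W$ and $S_j = S \cap V(A_j)$, partition $W \setminus S_W$ into sets $W_1, \ldots, W_z$ with $|W_j| = |S_j|$, and apply the $k$-FT property of each $G_j^*$ to the $k$-set $T_j = (W \setminus W_j) \cup S_j$. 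This yields $p_j$ disjoint $K_c$'s in $G_j^* - T_j$ covering exactly $(V(A_j) \setminus S_j) \cup W_j$; since the $V(A_j)$'s are pairwise disjoint and $\{W_j\}$ partitions $W \setminus S_W$, the $p$ copies taken together are vertex-disjoint and live in $G^* - S$.
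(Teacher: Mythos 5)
Your proof is correct and follows essentially the same route as the paper's: remove $W$ to obtain the partition into the $p_i$'s, use the separator together with $k<c$ to localize the $p$ cliques of $G-S$ and conclude that each $G_i$ is $k$-FT($p_iK_c$), and for minimality replace $G_i$ by a smaller $k$-FT($p_iK_c$) graph glued along a $K_k$ identified with $W$. The only difference is that you spell out in full the counting and the gluing verification that the paper dismisses as ``easy to check.''
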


\begin{proof}
Since $G$ is $k$-FT($pK_c$) and $|W|=k$, $G'$ contains $p$ disjoint cliques of size $c$. Since $W$ is a separator in $G$, each of each of these cliques is included in $V(A_i)$ for some $i$, $1 \leq i \leq z$. Moreover, since $\sum_{i=1}^{z}|V(A_i)|=pc$, each $V(A_i)$, $1\leq i \leq z$, is the union of exactly $p_i$ of these cliques and $|V(A_i)|=p_ic$. So $p_i$ is an integer for every $i$, $1 \leq i \leq z$, and $\sum_{i=1}^{z}p_i=p$. 

Take any $G_i$, $1\leq i \leq z$. In order to prove that $G_i$ is $k$-FT($p_iK_c$), we need to show that $G_i - S$ also contains $p_i$ disjoint cliques of size $c$ for any other choice of a set of vertices $S$, $S \subset V(G_i)$, of size $k$. 

Choose any $S$ with $S \subset V(G_i)$ and $|S|=k$. Let $G''_i = G_i - S$. Note that $|G''_i|=p_ic$. Let us show that $G''_i$ contains $p_i$ disjoint cliques of size $c$. Let $G'' = G-S$. Since $G$ is $k$-FT($pK_c$), $G''$ contains $p$ disjoint cliques of size $c$. Let us use $\mathcal{C}$ to denote this set of cliques. Moreover, let $\mathcal{C}_i$ be the set of elements of $\mathcal{C}$ that intersect $V(A_i)$. Since $W$ is a separator in $G$, the cliques in $\mathcal{C}_i$ are subsets of $V(G_i)$. By the choice of $\mathcal{C}$, they also are subsets of $V(G''_i)$. Since $|S|=k$ and $k<c$, there is $|V(A) \setminus S| > (p_i-1)c$. Thus $|\mathcal{C}_i|=p_i$. This concludes the proof that $G_i$ is $k$-FT($p_iK_c$).

For the final part of the lemma, assume that $W$ is a clique and $G$ is minimum $k$-FT($pK_c$). Choose any $G_i$, like above. So $G_i$ is $k$-FT($p_iK_c$). Towards a contradiction, suppose it is not minimum $k$-FT($p_iK_c$). Consider a graph $\widehat{G_i}$ that is minimum $k$-FT($p_iK_c$). Since $k < c$, by Lemma \ref{lem:ft}, $\widehat{G_i}$ contains a subgraph isomorphic to $K_k$ - let $\widehat{W}$ denote its vertex set. Let $\widehat{G}$ be the graph obtained from $G$ by removing $A_i$ and adding $\widehat{G_i}$, identifying the vertices of $W$ with the ones of $\widehat{W}$. It is easy to check that $\widehat{G}$ is $k$-FT($pK_c$) and $|E(\widehat{G})| < |E|$ - a contradiction with the choice of $G$.
\end{proof}

Let us show that, in a graph $G$ that is $k$-FT($pK_c$), replacing the closed neighborhood of a vertex of degree $c+k-1$ with a copy of $K_k$, gives a graph that is $k$-FT($p'K_c$) with $p'=p-1$.

\begin{lem}\label{lem:contraction}
Let $k$, $p$, and $c$ be integers with $k=1$, $p \geq 1$, and $c \geq 3$. Let $G=(V,E)$ be a $k$-FT($pK_c$) graph with $|V|=pc+k$. Let $x$, $x \in V$, be any vertex with $d_G(x) = c+k-1$. Then the graph $G'$ obtained from $G$ by replacing $N_G[x]$ with a copy of $K_k$, and making every vertex of this $K_k$ adjacent to every vertex in $N_G(N_G[x]))$ (neighborhood of the closed neighborhood of $x$), is $k$-FT($p'K_c$), with $p'=p-1$.
\end{lem}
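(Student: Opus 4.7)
The plan is as follows. Since $k=1$, the vertex $x$ has degree $c$, so Lemma \ref{lem:Kc+k-1} gives that $N_G[x]$ is a clique of size $c+1$ in $G$. The construction deletes these $c+1$ vertices and introduces a single new vertex $y$ adjacent to every vertex of $N_G(N_G[x])$; this yields $|V(G')| = (pc+1)-(c+1)+1 = (p-1)c+1 = p'c+k$, as required. To prove that $G'$ is $k$-FT($p'K_c$), I will show that for every $s \in V(G')$ the graph $G'-s$ contains $p-1$ pairwise disjoint copies of $K_c$, splitting into two cases depending on whether $s=y$ or $s \in V \setminus N_G[x]$.

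For $s=y$, I would pick any $v \in N_G(x)$ and examine a decomposition of $G-v$ into $p$ disjoint copies of $K_c$, which exists and exhausts the vertex set since $G$ is $k$-FT($pK_c$) and $|V|=pc+k$. In $G-v$ the vertex $x$ has degree $c-1$ with neighbor set $N_G(x)\setminus\{v\}$, forcing the copy of $K_c$ containing $x$ to equal $N_G[x]\setminus\{v\}$. The remaining $p-1$ copies then lie entirely in $V\setminus N_G[x]=V(G')\setminus\{y\}$, and since the edges inside $V\setminus N_G[x]$ agree in $G$ and in $G'$, these copies are $K_c$'s of $G'-y$.

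For $s \in V \setminus N_G[x]$, I would take a decomposition of $G-s$ into $p$ disjoint copies of $K_c$. The copy $K$ containing $x$ must be of the form $N_G[x]\setminus\{w\}$ for some $w \in N_G(x)$. If $w$ belongs to none of the other $p-1$ copies, these copies lie inside $V\setminus N_G[x]\setminus\{s\} \subset V(G'-s)$ and we are done. Otherwise exactly one copy $K'$ contains $w$; by disjointness from $K$ the remaining $c-1$ vertices of $K'$ lie in $V\setminus N_G[x]$ and each is a neighbor of $w$ in $G$, so each belongs to $N_G(N_G[x])$. By construction $y$ is adjacent in $G'$ to each of them, and they form a clique among themselves in $G'$, so $(K'\setminus\{w\})\cup\{y\}$ is a copy of $K_c$ in $G'-s$; together with the remaining $p-2$ untouched copies (all lying in $V\setminus N_G[x]\setminus\{s\}$), this yields the required $p-1$ disjoint $K_c$'s in $G'-s$.

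The delicate point is precisely this swap in the second case: one must ensure that when some clique of the decomposition of $G-s$ uses $w\in N_G(x)$, the replacement of $w$ by $y$ still gives a clique in $G'$, which is exactly why $y$ has to be joined to all of $N_G(N_G[x])$ rather than merely to $N_G(x)$. Everything else reduces to bookkeeping based on the structural information supplied by Lemma \ref{lem:Kc+k-1}.
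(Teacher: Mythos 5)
Your proof is correct and follows essentially the same strategy as the paper's: use Lemma \ref{lem:Kc+k-1} to see that $N_G[x]$ induces a $K_{c+1}$, pull back the deleted vertex of $G'$ to a deleted vertex of $G$ (an arbitrary neighbor of $x$ when $y$ itself is deleted), take the resulting partition of $G-S$ into $p$ cliques, discard the clique containing $x$, and substitute $y$ for the one leftover neighbor $w$ of $x$ in the clique covering it, which works precisely because $y$ is joined to all of $N_G(N_G[x])$. The only cosmetic difference is that you split into two cases (and include the vacuous subcase where $w$ is uncovered, which cannot occur since the $p$ cliques partition all $pc$ remaining vertices), whereas the paper phrases the substitution uniformly via a bijection.
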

\begin{proof}
We need to show that, for any $S'$, $S' \subset V(G')$ and $|S'|=k$, the vertices of $G' - S'$ can be partitioned into $p-1$ disjoint cliques of size $c$. Take any such $S'$. Note that $S' \cap N_G[x] = \emptyset$ (since $S' \subset V(G')$).

Note that, by Lemma \ref{lem:Kc+k-1}, $N_G[x]$ is isomorphic to $K_{c+k}$. Let $\widehat{K}$ be the subgraph isomorphic to $K_{k}$ added in $G'$.

Given $S'$ to be removed from $G'$, let us construct the corresponding $S$ to be removed from $G$. If $S' \cap V(\widehat{K}) = \emptyset$, then just let $S=S'$. Otherwise, let $i=|S' \cap V(\widehat{K})|$. By construction $i \leq k$. Let us construct $S$ from $S'$ by replacing the vertices in $S' \cap V(\widehat{K})$ by $i$ arbitrary vertices in $N(x)$. It is possible, since $i \leq k$ and $k\leq c+k-1$.

Since $G$ is $k$-FT($pK_c$), the vertices of $G-S$ can be partitioned into $p$ disjoint cliques of size $c$. Let us use $\mathcal{C}$ to denote this set of cliques. Let us use $C_x$ to denote the clique in $\mathcal{C}$ that contains $x$. Note that $C_x \cap V(G') = \emptyset$ and there are exactly $k-i$ other vertices in $N_G(x) \setminus C_v \setminus S$ that are covered by other cliques in $\mathcal{C}$. So there exists a bijection $b$ between $N_G(x) \setminus C_x \setminus S$ and $V(\widehat{K}) \setminus S'$.

Let us construct $\mathcal{C'}$ by removing $C_x$ from it, and adapting the other cliques by replacing the vertices in $N_G(x) \setminus C_x \setminus S$ by the corresponding vertices in $V(\widehat{K}) \setminus S'$, based on the bijection $b$. Since every vertex in $\widehat{K}$ is adjacent to every vertex in $N_G(N[x])$, the sets thus obtained are cliques indeed. So $\mathcal{C'}$ is a partition of $V(G') \setminus S'$ as needed.
\end{proof}

Note that, in the statement of Lemma \ref{lem:contraction}, if $N_G[x]$ is a separator in $G$, then the copy of $K_k$ it is replaced with in $G'$ is a separator of size $k$ that is a clique, so Lemma \ref{lem:Kkvc} applies.

\begin{figure}[ht] 
\caption{A separator $W=\{v\}$ and $z$ connected components}
\label{separator}
\begin{center}
\includegraphics[width=8cm]{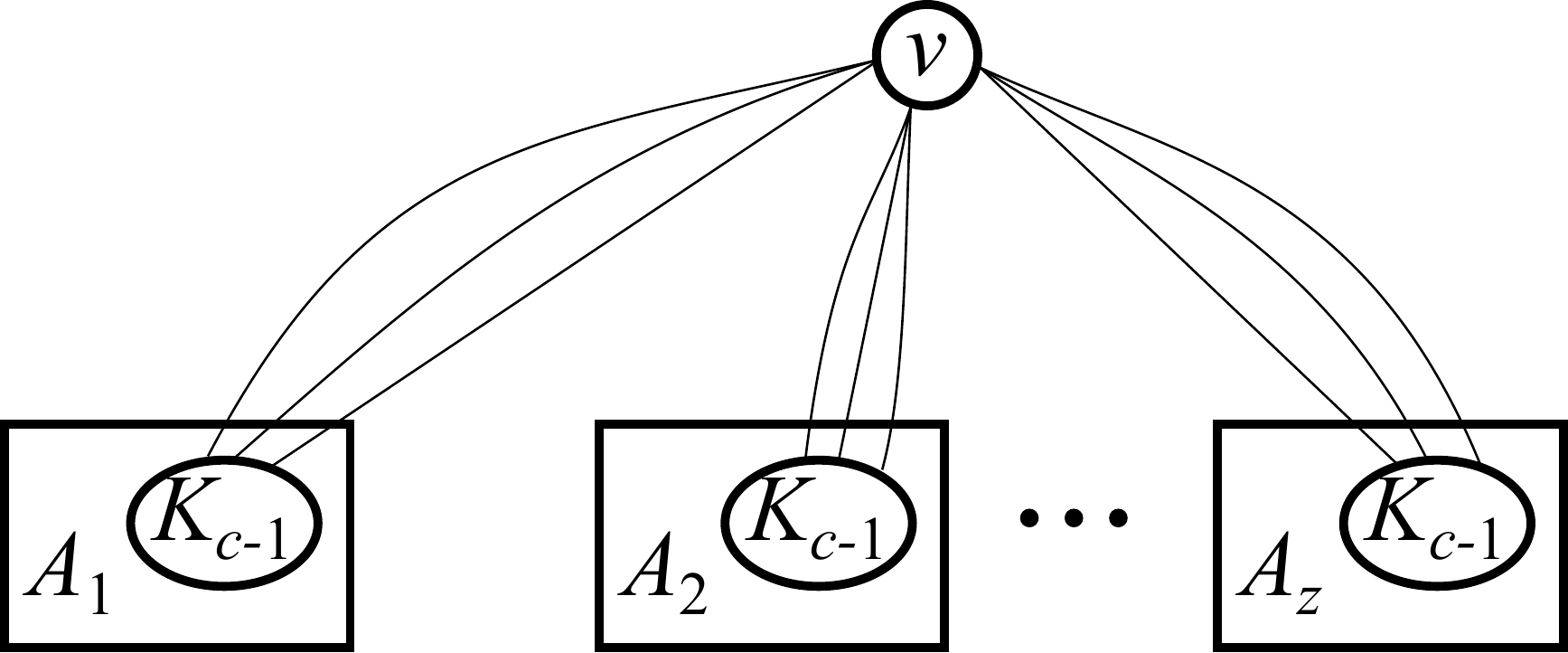}
\end{center}
\end{figure}

\begin{lem}\label{lem:KksKc}
Let $k$, $p$, and $c$ be integers with $k \geq 0$, $p \geq 1$, $c \geq 3$, and $k<c$. Let $G=(V,E)$ be a $k$-FT($pK_c$) graph with $|V|=pc+k$. Let $W$, with $W\subset V$ and $|W|=k$, be a separator in $G$. Let $\{A_i\}_{i=1}^{z}$ be the components of $G'$, $G'=G-W$. Then, for every vertex $x$ in $W$ and every $A_i$, $1 \leq i \leq z$, there exists a $V_{i,x}$, $V_{i,x} \subset V(A_i)$, with $|V_{i,x}|=c-1$, such that $\{x\} \cup V_{i,x}$ is a clique.
\end{lem}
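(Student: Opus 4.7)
The plan is to reduce to Lemma~\ref{lem:ft}(\ref{obs:ft:c+kh}) applied inside the induced subgraph $G_i := G[V(A_i) \cup W]$. By Lemma~\ref{lem:Kkvc}, $G_i$ is itself $k$-FT($p_i K_c$) and has exactly $p_i c + k$ vertices, so the hypotheses of Lemma~\ref{lem:ft} are satisfied for $G_i$. This is the natural move because the statement to be proved is a local assertion about a single component together with the separator, and Lemma~\ref{lem:Kkvc} is precisely what lets us localize the fault-tolerance property.

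The case $k = 0$ is vacuous since then $W = \emptyset$, so I would assume $k \geq 1$. Fix a vertex $x \in W$ and a component $A_i$. Since $|V(A_i)| = p_i c \geq c \geq 3$, I can select an arbitrary vertex $y \in V(A_i)$ and put $S := (W \setminus \{x\}) \cup \{y\}$. Then $S \subset V(G_i) \setminus \{x\}$ and $|S| = k$, so $S$ is exactly of the shape required by Lemma~\ref{lem:ft}(\ref{obs:ft:c+kh}).

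Applying that item to $G_i$, the vertex $x$, and the set $S$, I obtain a copy of $K_c$ in $G_i - S$ containing $x$. Because $W \setminus \{x\} \subset S$, the only vertex of $W$ remaining in $G_i - S$ is $x$ itself, so the other $c-1$ vertices of this clique must lie in $V(A_i) \setminus \{y\} \subset V(A_i)$. Taking $V_{i,x}$ to be this $(c-1)$-element set gives the required clique $\{x\} \cup V_{i,x}$ of size $c$.

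I do not foresee a substantial obstacle: once Lemma~\ref{lem:Kkvc} transfers the $k$-FT property down to $G_i$, Lemma~\ref{lem:ft}(\ref{obs:ft:c+kh}) automatically produces the desired $K_c$ through $x$. The only subtle point is the choice of the extra element of $S$ from $V(A_i)$, which is what prevents the $K_c$ around $x$ from "sitting outside" $V(A_i)$ in $G_i - S$, together with the trivial handling of $k = 0$.
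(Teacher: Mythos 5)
Your proof is correct and follows essentially the same route as the paper's: both localize via Lemma~\ref{lem:Kkvc} to $G_i=G[V(A_i)\cup W]$ and then delete the set $(W\setminus\{x\})\cup\{y\}$ with $y\in V(A_i)$ so that the $K_c$ through $x$ is forced into $\{x\}\cup V(A_i)$. The only cosmetic difference is that you invoke Lemma~\ref{lem:ft}(\ref{obs:ft:c+kh}) to produce the clique containing $x$, while the paper re-derives it directly from the partition of $V(G_i)\setminus W'$ into cliques of size $c$.
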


\begin{proof}
Choose any $A_i$, $1 \leq i \leq z$. Let $G_i=G[V(A_i) \cup W]$. By Lemma \ref{lem:Kkvc}, is $k$-FT($p_iK_c$) for some positive integer $p_i$. Choose any $x$ in $W$ and any $x_i$ in $V(A_i)$. Let $W'=W \cup\{x_i\} \setminus\{x\}$. Since $G_i$ is $k$-FT($p_iK_c$), $V(G_i)\setminus W'$ can be partitioned into cliques of size $c$. Since $V(G_i)\setminus W' = V(A_i) \cup \{x\} \setminus \{x_i\}$, there exists a clique $\{x\} \cup V_{i,x}$ of size $c$ as needed.
\end{proof}

In Figure \ref{separator}, we can observe an example of the situation described in Lemma \ref{lem:KksKc} for $k=1$.

Note that, in particular, Lemma \ref{lem:KksKc}, means that in a $k$-FT($pK_c$) graph $G$ of order $pc+k$, with $k \geq 0$, $p \geq 1$, $c \geq 3$, and $k<c$, given a separator $W$ of size $k$, all components of $G-W$ are \textit{full}, i.e., the neighborhood of the vertex set of each of them equals $W$. 

\subsection{Upper bound on the size of minimum $k$-FT($pK_c$) graphs}\label{ss:ub}

Finally, let us give a simple upper bound on the size of a minimum $k$-FT($pK_c$) graph.

\begin{lem}\label{lem:upb}
Let $k$, $p$, and $c$ be integers with $k \geq 0$, $p \geq 1$, $c \geq 3$. Let $G=(V,E)$ be a minimum $k$-FT($pK_c$) graph. Then $|E| \leq \left(\binom{c}{2}+ck\right)p + \binom{k}{2}$.
\end{lem}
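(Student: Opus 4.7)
The plan is to exhibit a specific construction $G^*$ that is $k$-FT($pK_c$) on $pc+k$ vertices and has exactly $\binom{c}{2}p+ckp+\binom{k}{2}$ edges. Since by definition a minimum $k$-FT($pK_c$) graph has order $pc+k$ and the smallest possible number of edges among such graphs, the existence of $G^*$ immediately gives the stated upper bound on $|E|$.

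I would define $G^*=(V^*,E^*)$ as follows. Take $p$ pairwise disjoint vertex sets $V_1,\dots,V_p$, each of size $c$, and an additional set $W$ of size $k$ disjoint from all the $V_i$. Let $V^*=W\cup\bigcup_{i=1}^p V_i$, so $|V^*|=pc+k$. Put a complete graph on each $V_i$, a complete graph on $W$, and join every vertex of $W$ to every vertex of every $V_i$. The edge count is then
\[
|E(G^*)|=p\binom{c}{2}+\binom{k}{2}+ckp=\Bigl(\binom{c}{2}+ck\Bigr)p+\binom{k}{2},
\]
matching the claimed bound.

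It remains to verify that $G^*$ is $k$-FT($pK_c$). Fix any $S\subset V^*$ with $|S|=k$, and set $a=|S\cap W|$ and $b_i=|S\cap V_i|$, so $a+\sum_{i=1}^p b_i=k$, hence $\sum_{i=1}^p b_i=k-a=|W\setminus S|$. Therefore we can partition $W\setminus S$ into sets $W_1,\dots,W_p$ with $|W_i|=b_i$ for each $i$. For every $i$, the set $(V_i\setminus S)\cup W_i$ has size $(c-b_i)+b_i=c$, and it induces a complete graph in $G^*$: indeed $V_i\setminus S$ is a clique (as $V_i$ is), $W_i$ is a clique (as $W$ is), and every vertex of $W$ is adjacent to every vertex of $V_i$ by construction. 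These $p$ cliques are pairwise disjoint and cover $V^*\setminus S$, so $G^*-S$ contains $pK_c$ as a subgraph.

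Since $G^*$ is a $k$-FT($pK_c$) graph on $pc+k$ vertices, any minimum $k$-FT($pK_c$) graph $G$ satisfies $|E(G)|\leq |E(G^*)|=\bigl(\binom{c}{2}+ck\bigr)p+\binom{k}{2}$. No substantial obstacle is expected: the only thing worth double-checking is the bookkeeping that $\sum b_i=k-a$ so that the $W_i$'s can be chosen as required, which is exactly where the assumption $|S|=k$ is used.
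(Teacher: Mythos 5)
Your proposal is correct and takes essentially the same approach as the paper: both exhibit the graph consisting of $p$ disjoint copies of $K_c$ plus a $K_k$ completely joined to all of them, count its $\left(\binom{c}{2}+ck\right)p+\binom{k}{2}$ edges, and conclude by minimality. The only difference is that you spell out the verification that this graph is $k$-FT($pK_c$) (which the paper leaves as ``easy to check''), and your verification is correct.
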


\begin{proof}
Let $H$ be the graph obtained by taking $p$ disjoint copies of $K_c$, $1$ copy of $K_k$, and making each vertex of each copy of $K_c$ adjacent to each vertex of $K_k$. It is easy to check that $H$ is $k$-FT($pK_c$) and $|E(H)| = \left(\binom{c}{2}+ck\right)p + \binom{k}{2}$. The conclusion follows.
\end{proof}

Our main result, Theorem \ref{thm:1pc} presented in Section \ref{se:main}, shows that the graphs constructed in the proof of Lemma \ref{lem:upb} are minimum $k$-FT($pK_c$) when $k=1$. We conjecture that they are also minimum when $k>1$.

The construction from the proof of Lemma \ref{lem:upb} can be easily generalized using the notions of tree-decomposition and chordal graph. Let us start by recalling the definition of the former.

\begin{df}[Section 12.3 in \cite{D2017}]
Let $G$ be a graph, $T$ a tree, and let $\mathcal{V} = \{V_t\}_{t \in V(T)}$ be a family of vertex sets $V_t \subseteq V(G)$ indexed by the nodes $t$ of $T$. The pair $(T, \mathcal{V})$ is called a tree-decomposition of $G$ if it satisfies the following three conditions:
\begin{enumerate}
    \item $V(G) = \bigcup_{t \in V(T)} V_t$;
    \item for every edge $e \in E(G)$ there exists $t \in V(T)$ such that $e \subseteq V_t$;
    \item $V_{t_{1}} \cap V_{t_{3}} \subseteq V_{t_{2}}$ whenever $t_2$ lies on the path between $t_1$ and $t_3$ in $T$.
\end{enumerate}
$T$ is the decomposition tree of $(T, \mathcal{V})$, and the elements of $\mathcal{V}$ are the parts of $(T, \mathcal{V})$.
\end{df}

Given two nodes $t_1, t_2$ adjacent in a decomposition tree $T$, $V_{t_1} \cap V_{t_2}$ is the \textit{adhesion set} of $V_{t_1}$ and $V_{t_2}$. The \textit{adhesion} of a tree-decomposition is the maximum size of its adhesion sets.

With the notion of tree-decomposition, we obtain the following characterization of chordal graphs.

\begin{prop}\label{pro:chordal}[Proposition 12.3.6 in \cite{D2017}]
$G$ is chordal if and only if $G$ has a tree-decomposition in which every part is a clique.
\end{prop}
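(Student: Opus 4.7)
The plan is to prove both implications separately, with the forward direction (constructing the decomposition) carrying the real structural content.

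For the easier direction ($\Leftarrow$), I would assume $G$ admits a tree-decomposition $(T,\mathcal{V})$ in which each part $V_t$ is a clique, and show that no induced cycle of length at least four exists. Suppose for contradiction that $C = v_1 v_2 \cdots v_n v_1$ is such an induced cycle, $n \geq 4$. For each $i$, let $T_i = \{t \in V(T) : v_i \in V_t\}$; by condition 3 of the definition, each $T_i$ induces a subtree of $T$. Condition 2 applied to the edge $v_iv_{i+1}$ gives a node whose part contains both, so $T_i \cap T_{i+1} \neq \emptyset$ for all $i$ (indices mod $n$). Conversely, for non-adjacent $v_i,v_j$ in $C$, any common node $t$ would make $\{v_i,v_j\}\subseteq V_t$, forcing the edge $v_iv_j$ in $G$ and contradicting the fact that $C$ is induced; hence $T_i\cap T_j=\emptyset$ for such pairs. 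But pairwise intersecting subtrees of a tree satisfy the Helly property, and one obtains a contradiction: the cyclic pattern of intersections cannot be realized for $n \geq 4$ by subtrees of a tree, as a median-of-three argument applied to $T_1, T_2, T_3$ already forces a fourth subtree $T_j$ to cross somewhere it should not.

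For the forward direction ($\Rightarrow$), I would induct on $|V(G)|$ using the classical fact that every chordal graph has a \emph{simplicial vertex}, i.e., a vertex whose closed neighborhood is a clique. The base case $|V(G)|=1$ is trivial: take $T$ to be a single node with part $V(G)$. For the step, let $v$ be a simplicial vertex of $G$ and set $K=N_G[v]$, a clique. The graph $G'=G-v$ is an induced subgraph of a chordal graph, hence chordal, so by induction there is a tree-decomposition $(T',\mathcal{V}')$ of $G'$ whose parts are cliques. Since $N_G(v)$ is a clique in $G'$, the subtrees $\{t : u \in V_t\}$ for $u\in N_G(v)$ pairwise intersect (because each pair of vertices of the clique sits in a common part), so by the Helly property there is a node $t_0$ with $N_G(v)\subseteq V_{t_0}$. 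Form $T$ by attaching a new leaf $t_v$ to $t_0$ and set $V_{t_v}=K$. Verifying the three conditions is routine: vertex coverage is immediate, every edge of $G$ either already lies in a part of $(T',\mathcal{V}')$ or is incident to $v$ and thus lies in $V_{t_v}$, and the subtree condition for every vertex is preserved since only $v$ has a new bag and its bag set $\{t_v\}$ is a single node, while each $u\in N_G(v)$ has its bag set in $T'$ augmented by the adjacent node $t_v$ precisely when $u\in V_{t_0}$.

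The main obstacle is the structural ingredient of the forward direction, namely the existence of a simplicial vertex in every chordal graph (equivalently, a perfect elimination ordering). The standard argument picks a minimal vertex separator $S$ between a pair of non-adjacent vertices, observes that $S$ is a clique (otherwise two non-adjacent vertices of $S$ together with paths through the two sides of $S-S$ would produce a chordless $4^+$-cycle), and then recurses into $G[S\cup A]$ for a component $A$ of $G-S$ to extract a simplicial vertex of $G$ that lies in $A$. Once this fact is in place the induction runs smoothly, and the backward direction is essentially a Helly argument on subtrees; both the simplicial-vertex lemma and the Helly property are where all the work is concentrated.
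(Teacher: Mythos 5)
The paper offers no proof of this statement at all --- it is quoted verbatim as Proposition 12.3.6 of Diestel's book --- so there is no in-paper argument to compare yours against; what you have written is the standard textbook proof. Your forward direction is complete and correct: Dirac's lemma that every chordal graph has a simplicial vertex (with the minimal-separator-is-a-clique argument you sketch), induction on $|V(G)|$, the Helly property of subtrees of a tree applied to the pairwise-intersecting subtrees of the clique $N_G(v)$ to locate a bag $V_{t_0}\supseteq N_G(v)$, and the attachment of a new leaf with bag $N_G[v]$. All three tree-decomposition axioms are verified as you say.

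The one genuine soft spot is the punchline of the converse. After correctly establishing that consecutive subtrees $T_i, T_{i+1}$ intersect while non-consecutive ones are disjoint, you conclude with ``the cyclic pattern of intersections cannot be realized \ldots a median-of-three argument applied to $T_1,T_2,T_3$ already forces a fourth subtree $T_j$ to cross somewhere it should not.'' This is the entire content of that direction, and it is asserted rather than proved. Moreover, the Helly property you invoke is a statement about \emph{pairwise intersecting} families of subtrees, and here the family is by construction not pairwise intersecting, so it gives you nothing; the median of $T_1,T_2,T_3$ need not even be well defined when $T_1\cap T_3=\emptyset$. The standard way to finish is different: pick an edge $e=\{x,y\}$ of $T$ lying on the path between a node of $T_1\cap T_2$ and a node of $T_2\cap T_3$ (these intersections are disjoint, so such an edge exists), note that $T_2$ contains both $x$ and $y$, and then walk around the rest of the cycle observing that each remaining subtree either contains $e$ or lies entirely in one component of $T-e$; since consecutive subtrees intersect and the walk must switch sides of $e$, some $T_j$ with $j$ not adjacent to $2$ on the cycle also contains $x$, whence $v_2v_j$ is a chord. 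You should write out this (or an equivalent closed-walk) argument; as it stands, the backward implication is not established.
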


It is easy to check that, without loss of generality, the tree-decomposition of $G$ in Lemma \ref{pro:chordal} can be restricted to have the set of parts equal to the set of maximal cliques of $G$. In this case, the adhesion sets of the tree-decomposition are the minimal separators of $G$ (which are cliques).

\begin{figure}[ht] 
\caption{$2$-FT($5K_3$) graphs}
\label{stabilne}
\begin{center}
\subfigure{\includegraphics[width=7cm]{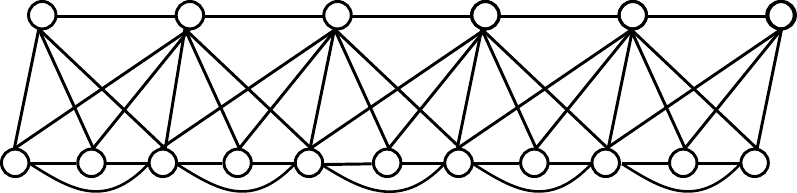}}
\quad
\subfigure{\includegraphics[width=7cm]{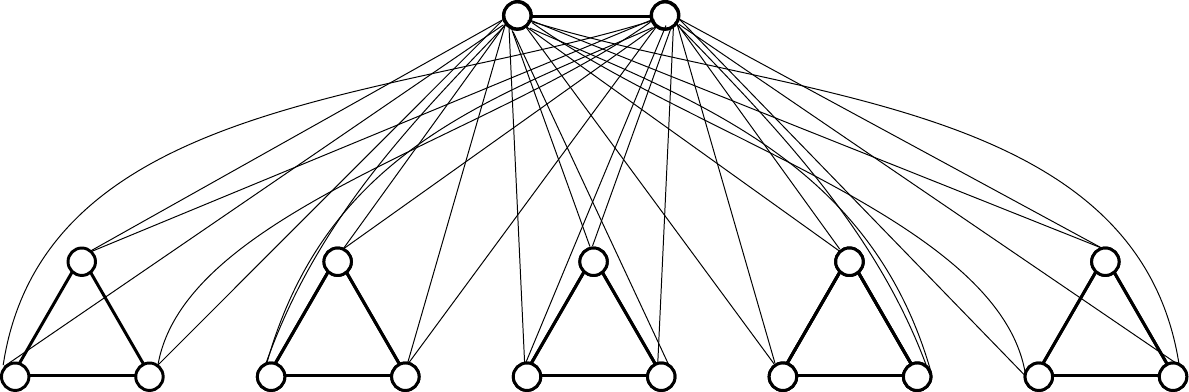}}
\end{center}
\end{figure}

Now we can present a generalization of the construction from the proof of Lemma \ref{lem:upb}. Based on Proposition \ref{pro:chordal}, it is easy to check the validity of the following lemma.

\begin{lem}\label{lem:upb_gen}
Let $k$, $p$, and $c$ be integers with $k \geq 0$, $p \geq 1$, $c \geq 3$. Let $G=(V,E)$ be a chordal graph of order $pc+k$ in which all minimal separators are of size $k$ and all maximal cliques are of size $k+c$. Then $G$ is a $k$-FT($pK_c$) graph with $|E| = \left(\binom{c}{2}+ck\right)p + \binom{k}{2}$.
\end{lem}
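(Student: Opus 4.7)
The plan is to fix a tree-decomposition $(T,\mathcal{V})$ of $G$ supplied by Proposition \ref{pro:chordal} in which the parts are exactly the maximal cliques of $G$. In such a decomposition every adhesion set is a minimal separator of $G$, so by hypothesis every part has size $k+c$ and every adhesion has size $k$. Writing $m=|V(T)|$ and using the standard inclusion--exclusion along the tree,
\[
|V|=m(k+c)-(m-1)k=mc+k,\qquad |E|=m\binom{k+c}{2}-(m-1)\binom{k}{2}.
\]
The identity $|V|=pc+k$ forces $m=p$, and expanding $\binom{k+c}{2}=\binom{k}{2}+\binom{c}{2}+kc$ gives $|E|=\binom{k}{2}+p\binom{c}{2}+pkc$, matching the claimed formula.

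For the $k$-FT$(pK_c)$ property I would induct on $p$. When $p=1$ the tree has a single node, so $G=K_{k+c}$ and deleting any $k$ vertices leaves a $K_c$. For $p\geq 2$, I pick a leaf $t$ of $T$ with unique neighbor $t'$ and set $W=V_t\cap V_{t'}$, of size $k$, and $A=V_t\setminus W$, of size $c$. The graph $G^{*}=G-A$ inherits the tree-decomposition $T-t$, whose parts still have size $k+c$ and adhesions still have size $k$; these parts remain maximal cliques in $G^{*}$ (any extension would give a larger clique already in $G$), and the minimal separators of $G^{*}$ are precisely the remaining adhesions, so the hypotheses of the lemma apply to $G^{*}$ with parameter $p-1$.

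Given $S\subseteq V$ with $|S|=k$, let $S_1=S\cap A$. Since $|S\cap W|+|S_1|\leq |S|=k$, I can pick $W'\subseteq W\setminus S$ with $|W'|=|S_1|$, and set $S^{*}=(S\setminus A)\cup W'\subseteq V(G^{*})$, again of size $k$. The inductive hypothesis yields $p-1$ vertex-disjoint copies of $K_c$ in $G^{*}-S^{*}$ that together cover all of $V(G^{*})\setminus S^{*}$, so in particular they avoid $W'$. Then $(A\setminus S_1)\cup W'\subseteq V_t$ has size $c$, spans a clique since $V_t$ is a clique, and is vertex-disjoint from the inductive cliques; together they furnish the $p$ copies of $K_c$ in $G-S$ that are required. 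The delicate point is this last piece of bookkeeping: the ``swap set'' $W'\subseteq W\setminus S$ must be chosen to compensate for the $|S_1|$ vertices of $A$ removed by $S$, precisely so that the extra $K_c$ built in the leaf bag is disjoint from every clique handed back by the inductive call.
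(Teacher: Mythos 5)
Your proof is correct, and it follows exactly the route the paper intends: the paper states that the lemma is ``easy to check'' based on Proposition \ref{pro:chordal} and gives no further details, and your argument --- counting vertices and edges along the clique tree whose parts are the maximal cliques, then peeling off a leaf bag and using the swap set $W'$ to compensate for the deleted leaf vertices --- is a complete and correct version of precisely that verification. No gaps.
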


In Figure \ref{stabilne} we can see examples of graphs described in Lemma \ref{lem:upb_gen}.

\section{Minimum $1$-FT($pK_c$) graphs}\label{se:main}

In this section we focus on $k$-FT($pK_c$) graphs $G$ with $|V(G)|=pc+k$, for $k=1$, $p \geq 1$, and $c \geq 3$. 

Recall that a block is a maximal connected subgraph without a cutvertex. So, every block is a maximal $2$-connected subgraph, a bridge, or an isolated vertex. Conversely, every such subgraph is a block. Different blocks of a graph $G$ overlap on at most one vertex, which is then a cutvertex of $G$. Every edge of $G$ lies in a unique block, and $G$ is the union of its blocks. Let $A$ be the set of cutvertices of $G$, and $\mathcal{B}$ the set of its blocks. We then have a natural bipartite graph on $A \cup \mathcal{B}$ formed by the edges $\{a,B\}$ with $a \in B$, and the following lemma holds.

\begin{lem}\label{lem:block-graph}[Lemma 3.1.4 in \cite{D2017}]
The block graph of a connected graph is a tree.
\end{lem}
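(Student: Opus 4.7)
The plan is to prove that the block graph $\mathcal{T}$ on vertex set $A \cup \mathcal{B}$ is a tree by checking two things: $\mathcal{T}$ is connected, and $\mathcal{T}$ is acyclic.

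For connectedness, since $G$ is connected, any two of its vertices are joined by a path $P$. Walking along $P$, one can only pass from one block to another at a shared cutvertex, so $P$ decomposes into a sequence of subpaths lying inside individual blocks, glued at cutvertices. This sequence immediately translates into a walk in $\mathcal{T}$ between the corresponding block-nodes, and a short case analysis extends this to all elements of $A \cup \mathcal{B}$.

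For acyclicity, I would argue by contradiction. Suppose $\mathcal{T}$ contains a cycle; being bipartite, such a cycle has the form $B_1, a_1, B_2, a_2, \ldots, B_m, a_m, B_1$ with $m \geq 2$, where the $B_i$ are distinct blocks, the $a_j$ are distinct cutvertices, and $a_j \in V(B_j) \cap V(B_{j+1})$ (indices mod $m$). Put $H = B_1 \cup \cdots \cup B_m$. The key step is to show that $H$ is a connected subgraph of $G$ that has no cutvertex of its own: once this is done, $H$ lies in a single block of $G$, and maximality of each $B_i$ forces $B_i = H$ for all $i$, contradicting the distinctness of the $B_i$. Connectedness of $H$ is clear from the cyclic chain of overlaps. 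For the cutvertex-free property, take any $v \in V(H)$. If $v$ is not one of the $a_j$, then $v$ lies in a unique $B_i$, and $B_i - v$ is connected (because $B_i$ itself has no cutvertex), while the remaining blocks still hang together along the cyclic arrangement through the unaffected cutvertices. If $v = a_j$ for some $j$, removing $v$ breaks the meeting point between $B_j$ and $B_{j+1}$, but the cycle supplies a detour routing $B_j - a_j$ to $B_{j+1} - a_j$ through $B_{j+2}, \ldots, B_{j-1}$ via the other cutvertices $a_{j+1}, \ldots, a_{j-1}$.

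The main obstacle will be the careful treatment of bridge-blocks: if some $B_i$ is a single edge, then both of its vertices are cutvertices and hence lie on the cycle, so removing either one leaves $B_i$ reduced to a single vertex. I would need to verify that the detour argument still produces a connected $H - v$ in this degenerate situation, and also check the base case $m=2$, where the cycle corresponds to two distinct blocks sharing two cutvertices $a_1, a_2$; both cases should be routine but deserve explicit bookkeeping.
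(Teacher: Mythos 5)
Your proposal is essentially correct. Note, however, that the paper does not prove this statement at all: it is quoted as Lemma 3.1.4 of Diestel's book, so there is no in-paper argument to compare against; what you have written is the standard textbook proof. Your connectedness argument is fine (it rests on the facts, restated in the paper, that every edge lies in a unique block and that a vertex shared by two blocks is a cutvertex). For acyclicity, your strategy is sound and in fact cleaner than you suggest: once $H = B_1 \cup \cdots \cup B_m$ is shown to be connected and without a cutvertex, you do not even need to invoke 2-connectivity or a containing block explicitly --- since each $B_i$ is by definition a \emph{maximal} connected subgraph without a cutvertex and $B_i \subseteq H$, maximality forces $B_i = H$ for every $i$, contradicting distinctness. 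Two small imprecisions: first, a vertex $v$ of $H$ that is not one of the $a_j$ need not lie in a \emph{unique} $B_i$ (it could be a cutvertex of $G$ shared by two of the chosen blocks without appearing on the chosen cycle), but this does not harm the argument, since each $B_i - v$ remains connected and the gluing cutvertices $a_1,\dots,a_m$ are all distinct from $v$. Second, the case $m=2$ collapses immediately: two distinct blocks sharing the two distinct vertices $a_1, a_2$ already contradicts the fact that distinct blocks overlap in at most one vertex, so no detour bookkeeping is needed there. The bridge-block worry is also benign: $K_2$ minus a vertex is a single vertex, which is still a nonempty connected graph attached to the rest of $H - v$ through the surviving cutvertex.
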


\begin{lem}\label{lem:Kc+1size}
Let $p$, and $c$ be integers with $p \geq 1$ and $c \geq 3$. Let $G=(V,E)$ be a $1$-FT($pK_c$) graph with $|V|=pc+1$. If every block of $G$ is isomorphic to $K_{c+1}$, then $G$ is a chordal graph in which all minimal separators are of size $1$ and all maximal cliques are of size $c+1$. \end{lem}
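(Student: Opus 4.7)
The statement to prove has three parts: $G$ is chordal, its minimal separators all have size $1$, and its maximal cliques all have size $c+1$. My plan is to exploit the block-cut tree structure of $G$. Note that $G$ is connected by Lemma \ref{lem:kvc} (so the block-cut tree is well-defined), and moreover, since every block is $K_{c+1}$ with $c\geq 3$, there are no bridge-blocks and no isolated-vertex blocks to worry about.

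For chordality I will invoke Proposition \ref{pro:chordal} by exhibiting a tree-decomposition of $G$ whose every part is a clique. Take the underlying tree $T$ to be the block-cut tree of $G$, which is a tree by Lemma \ref{lem:block-graph}: its nodes are the cut vertices $A$ together with the blocks $\mathcal{B}$, and $a\in A$ is adjacent to $B\in\mathcal{B}$ whenever $a\in V(B)$. For the family of parts, set $V_B=V(B)$ for each block-node $B$ and $V_a=\{a\}$ for each cut-vertex-node $a$. Every part is a clique, either a singleton or, by hypothesis, isomorphic to $K_{c+1}$. I will then verify the three tree-decomposition axioms: (i) the union of the parts is $V(G)$ because every vertex lies in some block; (ii) each edge of $G$ lies in a unique block and hence in the corresponding part; (iii) for every vertex $v$, the set of nodes whose part contains $v$ is connected in $T$, since if $v$ is a non-cut vertex it lies in a single block-node, while if $v$ is a cut vertex it appears in $V_v$ and in the parts of precisely those block-nodes adjacent to $v$ in $T$, which form a star.

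Having chordality, I handle the maximal cliques next. I will show that the maximal cliques of $G$ are exactly the sets $V(B)$ for $B\in\mathcal{B}$, which all have size $c+1$. Any clique $Q$ of $G$ with $|Q|\geq 3$ is $2$-connected, hence is contained in some block; cliques of size at most $2$ also lie inside some block. Thus every clique is contained in some $V(B)$, so every maximal clique coincides with some $V(B)$. Conversely, each $V(B)$ is itself a clique of size $c+1\geq 4$, and it cannot be strictly contained in another clique $Q\subseteq V(B')$, since two distinct blocks $B\neq B'$ intersect in at most one vertex while $|V(B)|\geq 4$ would force $|V(B)\cap V(B')|\geq 2$, a contradiction.

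Finally, for the minimal separators, let $S$ be a minimal $u,v$-separator in $G$. Since $u$ and $v$ are separated they are non-adjacent, hence in different blocks, and the block-cut tree has a well-defined path $uB_0 a_1 B_1 a_2 \cdots a_m B_m v$. Consecutive vertices among $u,a_1,a_2,\ldots,a_m,v$ share a common block and are therefore adjacent in $G$, giving a $u$-$v$ path in $G$ whose interior is $\{a_1,\ldots,a_m\}$. Consequently $S$ must contain some $a_i$; but $\{a_i\}$ alone already separates $u$ from $v$, and by minimality $S=\{a_i\}$, which has size $1$. The step I expect to require the most care is the verification of axiom (iii) of the tree-decomposition for cut vertices; the rest reduces to standard facts about block structure, applied to the hypothesis that every block is $K_{c+1}$.
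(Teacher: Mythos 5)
Your proof is correct and takes essentially the same route as the paper: the block--cut tree yields a tree-decomposition of $G$ whose parts are cliques, so chordality follows from Proposition \ref{pro:chordal}. You additionally spell out the verification of the tree-decomposition axioms and give explicit arguments that the maximal cliques are exactly the block vertex sets and that every minimal separator is a single cutvertex -- details the paper's proof leaves implicit -- and these arguments are sound.
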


\begin{proof}
By Lemma \ref{lem:c+k-1ec}, $G$ is $c$ edge-connected. Since $c \geq 3$, $G$ has no isolated vertices nor bridges, so all blocks of $G$ are maximal $2$-connected subgraphs of $G$.

Consider the block graph of $G$. By Lemma \ref{lem:block-graph}, it is a tree. It is easy to check that that this tree gives a tree-decomposition of $G$ in which every part is a clique. So, by Proposition \ref{pro:chordal}, it is a chordal graph.
\end{proof}

\begin{thm}\label{thm:1pc}
Let $p$ and $c$ be integers with $p \geq 1$ and $c \geq 3$. Let $G=(V,E)$ be a $1$-FT($pK_c$) graph with $|V|=pc+1$. If $|E| \leq \binom{c+1}{2} p$, then every block of $G$ is isomorphic to $K_{c+1}$.
\end{thm}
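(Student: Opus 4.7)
The plan is to proceed by induction on $p$. The base case $p=1$ is immediate: Lemma~\ref{lem:ft} forces $d_G(v) \geq c$ in a graph on $c+1$ vertices, so every vertex has degree exactly $c$, and $G = K_{c+1}$ is its only block.

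For the inductive step, I assume the result for all $p' < p$ and take $p \geq 2$. The average degree satisfies $2|E|/|V| \leq c(c+1)p/(pc+1) < c+1$, so combined with $d_G(v) \geq c$ from Lemma~\ref{lem:ft}, some $x \in V$ must have $d_G(x) = c$; by Lemma~\ref{lem:Kc+k-1}, $N_G[x]$ is then a $K_{c+1}$. I would apply Lemma~\ref{lem:contraction} to produce a $1$-FT($(p-1)K_c$) graph $G'$ on $(p-1)c+1$ vertices, with $|E(G')| = |E| - \binom{c+1}{2} - (m - m')$, where $m$ is the number of edges between $N_G[x]$ and $V \setminus N_G[x]$ and $m' = |N_G(N_G[x]) \setminus N_G[x]|$; since $m \geq m'$, this gives $|E(G')| \leq \binom{c+1}{2}(p-1)$. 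The induction hypothesis then says every block of $G'$ is $K_{c+1}$, and a bookkeeping argument on the block-cut tree (Lemma~\ref{lem:block-graph}) shows that $G'$ has exactly $p-1$ blocks, so $|E(G')| = \binom{c+1}{2}(p-1)$ exactly. Combining forces $|E(G)| = \binom{c+1}{2}p$ and $m = m'$; in particular, every $y \in V \setminus N_G[x]$ has at most one neighbor in $N_G[x]$, which I denote $f(y)$ when it exists.

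The main technical step is to show that $f$ is constant on $R := B' \setminus \{\hat{x}\}$ for every block $B'$ of $G'$ containing the contracted vertex $\hat{x}$. I plan to do this by analyzing a $pK_c$-decomposition $\mathcal{D}$ of $G - y$ for an arbitrary $y \in R$: since $x$ has no neighbors outside $N_G[x]$, the $K_c$ of $\mathcal{D}$ covering $x$ must be $N_G[x] \setminus \{w\}$ for some $w \in N_G(x)$. Since $G'$ is chordal with all blocks $K_{c+1}$ and all minimal separators $K_1$ (Lemma~\ref{lem:Kc+1size}), no vertex outside $B'$ has more than one neighbor in $B'$; hence the only way to pack the $(c-1)$-clique $R \setminus \{y\}$ into a $K_c$ in $G - y$ is to adjoin some $z \in N_G[x]$ adjacent to all of $R \setminus \{y\}$, which is equivalent to $f(y') = z$ for every $y' \in R \setminus \{y\}$. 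Carrying this out for two distinct choices of $y \in R$ pins $f|_R$ to a single value $w_R \in N_G(x)$. The hard part will be the case where some $y' \in R$ is itself a cutvertex of $G'$ with extra neighbors in another block $B^*$ of $G'$, because then the $K_c$ covering $y'$ in $\mathcal{D}$ may lie in $B^*$ rather than in $R \cup N_G[x]$; I plan to rule this out by choosing $y_1, y_2 \in R$ with purportedly $f(y_1) \neq f(y_2)$, analyzing $G - y_1$ and $G - y_2$ in parallel, and deriving incompatible forced values for $f$ on $R \setminus \{y_1, y_2\}$.

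Once $f$ is constant on each such $R$ with value $w_R$, the blocks of $G$ are exactly $N_G[x]$, one block $R \cup \{w_R\}$ per block of $G'$ through $\hat{x}$, and the blocks of $G'$ disjoint from $\hat{x}$ (which are unaffected by the construction in Lemma~\ref{lem:contraction}). Each of these is a $K_{c+1}$, closing the induction.
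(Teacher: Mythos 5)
Your setup is sound through the point where you force $|E(G)|=\binom{c+1}{2}p$ and $m=m'$: the degree count, the application of Lemma~\ref{lem:contraction}, the edge bookkeeping via the block tree of $G'$, and the reduction of the theorem to the claim that the attachment map $f$ is constant on $R=B'\setminus\{\hat{x}\}$ for every block $B'$ of $G'$ through $\hat{x}$ are all correct. The gap is exactly where you flag it: the ``main technical step'' is not carried out in the case where vertices of $R$ are cutvertices of $G'$. Your phrase ``the only way to pack the $(c-1)$-clique $R\setminus\{y\}$ into a $K_c$'' presupposes that the decomposition $\mathcal{D}$ of $G-y$ covers $R\setminus\{y\}$ by a \emph{single} clique, but $\mathcal{D}$ only has to cover each vertex of $R\setminus\{y\}$ by \emph{some} $K_c$, and for a vertex $r$ that is a cutvertex of $G'$ that $K_c$ may sit inside a block $B^*$ of $G'$ hanging off $r$ (possibly together with $f(r)$), which says nothing about $f$ on the rest of $R$. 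In the extreme case where every vertex of $R$ is a cutvertex of $G'$, the decompositions of $G-y_1$ and $G-y_2$ can both cover all of $R\setminus\{y_i\}$ by such escaping cliques; the only information they are forced to yield is that the vertex $w\in N(x)$ left uncovered by $x$'s clique must be completed by $c-1$ vertices of some $R_j$ with $f\equiv w$ there --- and that $j$ need not be the block you are analyzing. So the proposed ``parallel analysis'' does not obviously produce incompatible forced values, and closing it would require chasing the cascade of forced coverings through the whole block tree. This is a genuine missing argument, not a routine verification.

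The paper avoids this case entirely by a step you skipped: it first proves that a minimal counterexample is $2$-connected, by taking a leaf block $B$ of the block tree, applying Lemma~\ref{lem:Kkvc} at its cutvertex, and splitting the edge budget $\binom{c+1}{2}p$ between $B$ and the rest. Once $G$ is $2$-connected, the contracted vertex $\hat{x}$ is the only possible cutvertex of $G'$, so $G'$ is a collection of $K_{c+1}$'s all sharing $\hat{x}$, no vertex of any $R_i$ is a cutvertex of $G'$, and every vertex of $R_i$ has degree exactly $c$ in $G$; Lemma~\ref{lem:Kc+k-1} then makes $N_G[r]$ a clique of size $c+1$ and forces $f$ constant on $R_i$ in one line. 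I recommend you either add that $2$-connectivity reduction (recasting your induction as a minimal-counterexample argument, which is compatible with everything else you wrote) or supply a complete argument for the cutvertex case.
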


\begin{proof}
Reasoning towards a contradiction, suppose that $G$ is a counterexample of the smallest possible order. I.e., for all $\widehat{p}$, $\widehat{p} < p$, $1$-FT($\widehat{p}K_c$) graphs $\widehat{G}=(\widehat{V},\widehat{E})$ with $|\widehat{V}|=\widehat{p}c+1$ and $|\widehat{E}| \leq \binom{c+1}{2}\widehat{p}$ satisfy the conclusion of the theorem. Notice that $p > 1$, since $K_{c+1}$ is the only $1$-FT($K_c$) graph satisfying the premises - and it also satisfies the conclusion.

By Lemma \ref{lem:c+k-1ec}, $G$ has neither bridges nor isolated vertices. So all blocks of $G$ are maximal 2-connected subgraphs of $G$.

Let us show that $G$ is $2$-connected. Towards a contradiction, suppose it is not. Consider the block graph $BG(G)$ of $G$. Take a block $B$ of $G$ that is a leaf in $BG(G)$., and let $u$ be the cutvertex adjacent to $B$ in $BT(G)$. Thus $u \in V(B)$. 

Let $G[R]$ be the subgraph induced by $R=V \setminus V(B) \cup \{u\}$. By Lemma \ref{lem:Kkvc}, $B$ and $G[R]$ are $1$-FT($p_BK_c$) and $1$-FT($p_RK_c$), respectively, for some positive integers $p_B$ and $p_R$ such that $p_B+p_R=p$. There is $|E(B)| \leq \binom{c+1}{2}p_B$ or $|E(G[R])| \leq \binom{c+1}{2}p_R$, since otherwise we would have $|E(G)| = |E(B)| + |E(G[R])| > \binom{c+1}{2}p_B + \binom{c+1}{2}p_R = \binom{c+1}{2}p$ - a contradiction.

Suppose that $|E(B)| \leq \binom{c+1}{2}p_B$. Then $B$ is isomorphic to $K_{c+1}$, since $B$ is $2$-connected, $G$ is the smallest counterexample, and $B$ is a leaf in $BG(G)$. So we have $|E(B)| = \binom{c+1}{2}p_B$, which implies that $|E(G[R])| \leq \binom{c+1}{2}p_R$. By a similar reasoning, $G[R]$ also satisfies the conclusion of the theorem. Therefore $G$ satisfies the conclusion itself - a contradiction. 

So there is $|E(B)| > \binom{c+1}{2}p_B$ and $|E(G[R])| < \binom{c+1}{2}p_R$. Again, since $G$ is the smallest counterexample, $G[R]$ satisfies the conclusion of the theorem. By Lemma \ref{lem:Kc+1size}, we get that $|E(G[R])| = \binom{c+1}{2}p_R$, and so $|E| > \binom{c+1}{2}p$ - a contradiction. So $G$ is $2$-connected.

Let us show that $G$ contains a vertex $v$ of degree $c$. By Lemma \ref{lem:ft}, there is $d(x) \geq c$ for every vertex $x\in V$. Towards a contradiction, suppose there is $d(x) > c$ for every vertex $x\in V$. So we have $|E| \geq (pc+1)*(c+1)/2  = \binom{c+1}{2}p + \frac{c+1}{2}$. A contradiction with $|E| \leq \binom{c+1}{2}p$.

Let $G' =(V', E') = G \slash N[v]$ be the graph obtained from $G$ by contracting the closed neighborhood of $v$ in $G$. Let $v'$ be the vertex obtained from contracting $N[v]$. By Lemma \ref{lem:contraction}, $G'=(V', E')$ is $1$-FT($p'K_c$) with $p'=p-1$.

The contraction of $N[v]$ eliminated the $\binom{c+1}{2}$ edges of $G[N[v]]$ and there exists a natural injection from the newly added edges incident to $v'$, to the removed edges incident to one vertex in $N[v]$ and another in $V \setminus N[v]$. So there is $|E(G')| \leq \binom{c+1}{2}p'$. Moreover, $G'$ satisfies the conclusion of the theorem, since $G$ is the smallest counterexample.

If $v'$ is a cutvertex, since $G$ is $2$-connected, then $v'$ is the only cutvertex in $G'$ (Any other cutvertex of $G'$ would also be a cutvertex in $G$ - a contradiction.). Otherwise, $G'$ has no cutvertices.   

Suppose that $v'$ is not a cutvertex. So $G'$ is $2$-connected and, not being a counterexample, $G'$ is isomorphic to $K_{c+1}$. So $p'=1$ and $G'$ has $\binom{c+1}{2}$ edges. Moreover, $G$ has $2\binom{c+1}{2}$ edges. Indeed, the contraction diminished the number of edges by at least $\binom{c+1}{2}$ and, satisfying the premises of the theorem, $G$ cannot have more than $2\binom{c+1}{2}$ edges. 

So $G$ is composed of a subgraph isomorphic to $K_{c+1}$ corresponding to $N[v]$, denoted by $\widehat{K_{c+1}}$, a subgraph isomorphic to $K_{c}$ corresponding to the other vertices, denoted by $\widehat{K_{c}}$, and a matching of size $c$ between $\widehat{K_{c+1}}$ and $\widehat{K_{c}}$. Indeed, there must be an edge between each vertex of $\widehat{K_{c}}$ and a vertex of $\widehat{K_{c+1}}$ in order to get a $K_{c+1}$ after the contraction, and the number of such edges must be $c$ to give the total count of $2\binom{c+1}{2}$ in $G$. 

Now let $x$ be any vertex in $\widehat{K_{c}}$. There is $d_{G}(x)=c$ and, by Lemma \ref{lem:Kc+k-1}, $x$ belongs to a subgraph of $G$ isomorphic to $K_{c+1}$ - a contradiction with the above observation on the structure of $G$.

Now suppose that $v'$ is the unique cutvertex in $G'$. So $G'$, not being a counterexample, is isomorphic to $p'$ copies of $K_{c+1}$ sharing the vertex $v'$. So $G'$ has exactly $\binom{c+1}{2}p'$ edges, and the contraction diminished the number of edges by exactly $\binom{c+1}{2}$. On the other hand, $N[v]$ is a separator in $G$, and every component of $G - N[v]$ is a $K_{c}$. Let $\widehat{K_{c}}$ be one of these components. By an argument similar to that of the case when $v'$ is not a cutvertex, $G[V(\widehat{K_{c}}) \cup V(N_G[v])]$ is composed of a $K_{c+1}$, a $K_{c}$, and a matching between them - and $G$ is not $1$-FT($pK_{c}$). A contradiction.
\end{proof}

\section{Conclusions}\label{se:conc}

In this paper we have provided an upper bound on the number of edges in minimum $k$-FT($pK_c$) graphs for $k\geq 1$, $p\geq 1$ and $c\geq 3$ (Lemma \ref{lem:upb_gen}). We have shown that this bound is tight and given a complete characterization of minimum $k$-FT($pK_c$) for $k=1$ (Theorem 1). We conjecture that this bound is also tight for $k > 1$, $p\geq 1$, $c\geq 3$, and $k<c$, and we believe that our techniques could be generalized to prove these cases.

There are also problems left open for $c=2$. For $k=1$, it is easy to check that a cycle $C_{2p+1}$ is minimum $1$-FT($pK_2$). For $k$ even and $2p\leq k-2$, the construction presented in \cite{zhang2012minimum} (based on the Harary graph $H_{m,n}$, which is an $m$-connected graph with $n$ vertices with the smallest number of edges among all such graphs \cite{harary1962maximum}) yields $(2p+k)(k+1)/2$ edges. But other cases are still open. Our techniques from this paper do not apply for $c=2$, so some new ideas are needed to complete the cases with $c=2$.

\bibliography{cycles}

\end{document}